\providecommand\@dotsep{5}
\def\listtodoname{List of Todos}
\def\listoftodos{\@starttoc{tdo}\listtodoname}
\numberwithin{equation}{section}
\newtheorem{thm}{Theorem}[section]
  \theoremstyle{plain}
  \newtheorem{lem}[thm]{Lemma}
  \theoremstyle{plain}
  \newtheorem{prop}[thm]{Proposition}
  \theoremstyle{plain}
  \theoremstyle{remark}
  \newtheorem{rem}[thm]{Remark}
\newcommand{\N}{{\mathbb N}}
\newcommand{\Z}{{\mathbb Z}}
\newcommand{\R}{{\mathbb R}}
\newcommand{\eps}{\varepsilon}
\title{Weyl-type laws for fractional $p$-eigenvalue problems}
\author[A.\ Iannizzotto]{Antonio Iannizzotto}
\author[M.\ Squassina]{Marco Squassina}
\address{Dipartimento di Informatica
\newline\indent
Universit\`a degli Studi di Verona
\newline\indent
C\'a Vignal II
\newline\indent
Strada Le Grazie I-37134 Verona, Italy}
\email{antonio.iannizzotto@univr.it}
\email{marco.squassina@univr.it}
\thanks{The second author was supported by 2009 MIUR project:
   ``Variational and Topological Methods in the Study of Nonlinear Phenomena''.}
\subjclass[2000]{35P15, 35P30, 35R11}
\keywords{Fractional $p$-Laplacian problems, fractional Sobolev spaces, higher eigenvalues, asymptotics.}
\begin{document}

\begin{abstract}
We prove an asymptotic estimate for the growth of variational eigenvalues of 
fractional $p$-Laplacian eigenvalue problems on a smooth bounded domain.
\end{abstract}

\maketitle


\section{Introduction}

\noindent
Let $\Omega$ be a smooth bounded domain of $\R^N$ and, for $p>1$, consider the problem
\begin{equation*}
\left\{  \begin{array}{ll}
    -\Delta_p u=\lambda |u|^{p-2}u &\mbox{in $\Omega$,} \\
    u=0 &\mbox{on $\partial\Omega.$}
        \end{array}
      \right.
\end{equation*}
In the linear case $p=2$, the spectrum reduces to an increasing sequence $(\lambda_k)$ and 
a celebrated result obtained by Weyl \cite{W} around 1912 states
that the {\it counting function} $\mathcal{N}$ for eigenvalues, defined by
\begin{equation}\label{cf}
\mathcal{N}(\lambda)=\sharp\{k\in\N :\lambda_k<\lambda\},
\end{equation}
satisfies $\mathcal{N}(\lambda)\sim (2\pi)^{-N}\omega_N |\Omega|\lambda^{N/2}$
for $\lambda$ large, being $\omega_N$ the volume of the unit ball in $\R^N$.
In turn, the asymptotic growth of the $\lambda_k$'s is $k^{2/N}$, up to some constant depending on $N$ and $|\Omega|$.
In the case $p\neq 2$, although the spectrum is not yet completely understood, it is known that there exists
a sequence of {\em variational} eigenvalues $(\lambda_k)$ and, around 1989, Garc\'{\i}a Azorero $\&$ Peral Alonso \cite{GP} and Friedlander \cite{F} obtained the following asymptotic two-sided estimate for such sequence:
\[
C_1|\Omega|\lambda^{N/p}\leq\mathcal{N}(\lambda)\leq C_2|\Omega|\lambda^{N/p},
\,\,\quad\text{$\lambda>0$ large}.
\]
In this paper, we deal with the eigenvalue problem for the {\it fractional $p$-Laplacian}, namely
\begin{equation}\label{prob}
\left\{  \begin{array}{ll}
    (-\Delta)_p^s u=\lambda|u|^{p-2}u &\mbox{in $\Omega$,} \\
    u=0 &\mbox{in $\R^N\setminus\Omega,$}
        \end{array}
      \right.
\end{equation}
where $0<s<1$, $\Omega\subset\R^N$ ($N\geq 2$) is a bounded domain with Lipschitz boundary and
$(-\Delta)_p^s$ is defined, up to a normalization factor $c(s,p,N)$, as
\begin{equation*}
(-\Delta)^s_pu(x) = 2\lim_{\eps\to 0^+}\int_{\R^{N}\setminus B_\eps(x)}\frac{|u(x)-u(y)|^{p-2}(u(x)-u(y))}{|x-y|^{N+sp}}dy, \quad x\in\R^N.
\end{equation*}
In the particular but very important linear case $p=2$,  the operator $(-\Delta)_p^s$ reduces to
the linear fractional Laplacian $(-\Delta)^s$. Due to the non-local character of such operator, it is natural to work in the Sobolev space $W^{s,p}(\R^N)$ and express the Dirichlet condition on $\R^N\setminus\Omega$ rather than on $\partial\Omega$.
\vskip2pt
\noindent
Though fractional Sobolev spaces are well known since the beginning of the last century, especially in the field of harmonic analysis, they have become increasingly popular in the last few year, under the impulse of the work of Caffarelli $\&$ Silvestre \cite{CS} (see Di Nezza, Palatucci $\&$ Valdinoci \cite{DPV} and the reference within). The large amount of new contributions, especially focused on the linear case $p=2$, are motivated by several applications. For instance, Laskin \cite{L} has obtained, in quantum mechanics, a fractional generalization of the classical Schr\"odinger equation involving the operator $(-\Delta)^s$. The nonlinear eigenvalue problem~\eqref{prob} was first studied by Lindgren $\&$ Lindqvist \cite{LL} (for the case $p\ge 2$) and by Franzina $\&$ Palatucci \cite{FP} (for any $p>1$). In \cite{LL} much attention is paid to the asymptotics of problem \eqref{prob} as $p\to\infty$, while in \cite{FP} some regularity results for the eigenfunctions are proved.
\vskip2pt
\noindent
We provide a variational formulation for problem~\eqref{prob}. A {\it (weak) solution} of problem \eqref{prob} is a function $u\in W^{s,p}(\R^N)$ such that $u=0$ a.e.\ in $\R^N\setminus\Omega$ and
\begin{equation}\label{var}
\int_{\R^{2N}}\frac{|u(x)-u(y)|^{p-2}(u(x)-u(y))(v(x)-v(y))}{|x-y|^{N+sp}}dxdy=\lambda\int_\Omega |u|^{p-2}u v dx
\end{equation}
for all $v\in W^{s,p}(\R^N)$ such that $v=0$ a.e.\ in $\R^N\setminus\Omega$. We know that any solution is essentially bounded (see \cite[Theorem 3.2]{FP}), and H\"older continuous if $sp>N$ (see \cite[Theorem 3]{LL}). For all $\lambda\in\R$, there exists a non-zero solution $u$ of \eqref{prob}, then we say that $\lambda\in\R$ is an {\it eigenvalue} and $u$ 
is a $\lambda$-{\it eigenfunction}. The set of eigenvalues is the {\it spectrum} of \eqref{prob} and is denoted by $\sigma(s,p)$, and for all $\lambda\in\sigma(s,p)$ the set of $\lambda$-eigenfunctions is called $\lambda$-{\it eigenspace}. Clearly, $\sigma(s,p)\subset\R^+$ and all eigenspaces are star-shaped sets, as both sides of \eqref{prob} are $(p-1)$-homogeneous.
\vskip2pt
\noindent
We recall some remarkable properties of $\sigma(s,p)$:
\begin{itemize}
\item[$(i)$] $\sigma(s,p)$ is a closed set;
\item[$(ii)$] $\lambda_1=\min\sigma(s,p)>0$ is simple and isolated;
\item[$(iii)$] for all $\lambda\in\sigma(s,p)$ with $\lambda>\lambda_1$, any $\lambda$-eigenfunction $u$ is sign-changing in $\Omega$;
\item[$(iv)$] if $(\Omega_j)$ is a non-decreasing sequence of domains such that $\Omega=\bigcup_{j=1}^\infty\Omega_j$, then $\lambda_1(\Omega_j)\searrow\lambda_1$  
(here $\lambda_1(\Omega_j)$ denotes the first eigenvalue of \eqref{prob} on the domain $\Omega_j\subset\Omega$);
\item[$(v)$] if $\Omega$ is a ball, then any positive (resp. negative) $\lambda_1$-eigenfunction is radially symmetric and radially decreasing (resp. increasing).
\end{itemize}
For the proofs of $(i)$-$(iv)$ and the exact ranges of $s,p$ for which these assertions hold,
see \cite{LL} and \cite{FP} (some of these properties also hold with a more general kernel $K(x,y)$, still with differentiability order $s$ and summability order $p$, replacing $|x-y|^{-N-sp}$). For $(v)$, see Proposition \ref{symmetry} below.
\vskip2pt
\noindent
In the present paper we focus on the higher fractional $p$-eigenvalues, following \cite{F} and \cite{GP}, dealing with the $p$-Laplacian operator. We will define a non-decreasing sequence 
$(\lambda_k)$ of variational (of min-max type) eigenvalues by means of the cohomological index (see Perera, 
Agarwal $\&$ O'Regan \cite{PAO}), and we will provide an 
estimate of the counting function of $(\lambda_k)$, still denoted by $\mathcal{N}(\lambda)$ and defined as in \eqref{cf}, at infinity.
\vskip2pt
\noindent
Our main result is the following:

\begin{thm}\label{main}
Let $0<s<1$, $p>1$, $N\geq 2$ and $\Omega\subset\R^N$ be a 
bounded domain with Lipschitz boundary. Then problem \eqref{prob} admits a non-decreasing 
sequence $(\lambda_k)$ of positive eigenvalues such that $\lambda_k\to\infty$ and
\begin{equation}
\label{asym-main1}
\mathcal{N}(\lambda)\geq C_1|\Omega|^\frac{sp}{Np-N+sp}\lambda^\frac{N}{Np-N+sp},
\,\,\,\quad\text{$\lambda>0$ large},
\end{equation}
for some constant $C_1>0$ depending only on $s$, $p$ and $N$. Furthermore, for $sp>N$,
\begin{equation}
\label{asym-main2}
\mathcal{N}(\lambda)\leq C_2|\Omega|^\frac{sp}{sp-N}\lambda^\frac{N}{sp-N},
\,\,\,\quad\text{$\lambda>0$ large},
\end{equation}
for some constant $C_2>0$ depending only on $s$, $p$ and $N$. 
\end{thm}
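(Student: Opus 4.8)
The plan is to construct the sequence $(\lambda_k)$ via a cohomological-index min-max scheme and then obtain the two bounds by comparing with explicit trial configurations, following the strategy of Friedlander and Garc\'{\i}a Azorero--Peral Alonso in the local case, with the Rayleigh quotient
\[
R(u)=\frac{\displaystyle\int_{\R^{2N}}\frac{|u(x)-u(y)|^p}{|x-y|^{N+sp}}\,dx\,dy}{\displaystyle\int_\Omega|u|^p\,dx},
\]
defined on the closed subspace $\widetilde W^{s,p}_0(\Omega)$ of $W^{s,p}(\R^N)$ of functions vanishing a.e.\ outside $\Omega$. First I would set $\lambda_k=\inf_{i(S)\geq k}\sup_{u\in S}R(u)$, the infimum taken over symmetric subsets $S$ of the unit sphere in $L^p(\Omega)$ lying in $\widetilde W^{s,p}_0(\Omega)$ with cohomological index at least $k$; standard index theory plus the compact embedding $\widetilde W^{s,p}_0(\Omega)\hookrightarrow\hookrightarrow L^p(\Omega)$ (valid since $\Omega$ is bounded Lipschitz) gives that each $\lambda_k$ is attained, is a genuine eigenvalue in the sense of \eqref{var}, is nondecreasing in $k$, and tends to $+\infty$ (otherwise one would get infinitely many $L^p$-orthonormal eigenfunctions with bounded Gagliardo seminorm, contradicting compactness). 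By construction $\mathcal N(\lambda)=\sharp\{k:\lambda_k<\lambda\}$ equals the largest $k$ with $\lambda_k<\lambda$, which by the index characterization is at least (resp.\ at most) the index of suitable sub/super-level sets of $R$.

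For the lower bound \eqref{asym-main1} I would exhibit, for each small cube of side $\ell$ that fits inside $\Omega$, a large-dimensional space of trial functions supported in that cube and estimate their Rayleigh quotient. Tiling $\Omega$ by $\sim|\Omega|\ell^{-N}$ disjoint congruent cubes and taking the direct sum of the first few Dirichlet ``modes'' on each (or simply scaled bump functions), one builds a symmetric set of index $k\sim m\,|\Omega|\ell^{-N}$ on which $R\leq C\,m^{sp/N}\ell^{-sp}$ by scaling of the Gagliardo seminorm (the seminorm of $u(\cdot/\ell)$ scales like $\ell^{N-sp}$ while the $L^p$ norm scales like $\ell^N$, giving the factor $\ell^{-sp}$; the $m^{sp/N}$ comes from packing $m$ oscillations into a unit cube). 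Choosing $m$ and $\ell$ to balance $k\sim|\Omega|\ell^{-N}m$ against the constraint $C m^{sp/N}\ell^{-sp}<\lambda$ and optimizing yields $k\gtrsim|\Omega|^{sp/(Np-N+sp)}\lambda^{N/(Np-N+sp)}$, i.e.\ exactly the exponents in \eqref{asym-main1}; since this set has index $\geq k$ and $\sup R<\lambda$ on it, $\lambda_k<\lambda$, hence $\mathcal N(\lambda)\geq k$.

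For the upper bound \eqref{asym-main2}, valid in the regime $sp>N$ where $\widetilde W^{s,p}_0(\Omega)\hookrightarrow C^{0,\alpha}$ with $\alpha=s-N/p$, I would use a dual (Neumann-type) covering argument: cover $\Omega$ by $\sim|\Omega|\ell^{-N}$ cubes of side $\ell$, and on the finite-codimension subspace $V$ of functions whose restriction to each cube has $L^p$-mean zero, prove a Poincar\'e--Wirtinger-type inequality $\int|u|^p\leq C\ell^{sp-N}[u]_{s,p}^p$ via the Morrey-type bound on each piece (here the nonlocal seminorm over $\R^{2N}$ dominates the sum of the local seminorms over each cube, since the full double integral includes the within-cube contributions). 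Thus $R(u)\geq C^{-1}\ell^{N-sp}$ on $V$, so any symmetric set with index exceeding $\mathrm{codim}\,V\sim|\Omega|\ell^{-N}$ must meet $V$ and therefore have $\sup R\geq C^{-1}\ell^{N-sp}$; equating $C^{-1}\ell^{N-sp}=\lambda$ gives $\ell\sim\lambda^{-1/(sp-N)}$ and hence $\lambda_k\geq\lambda$ once $k\gtrsim|\Omega|\ell^{-N}=|\Omega|\lambda^{N/(sp-N)}$, which is \eqref{asym-main2}. The main obstacle I anticipate is the nonlocal Poincar\'e--Wirtinger estimate underlying the upper bound: unlike the local case one cannot simply localize the energy, so I would need to carefully control the long-range tails of the Gagliardo seminorm — splitting the double integral into near-diagonal contributions (handled cube-by-cube via fractional Poincar\'e on a single cube, using $sp>N$) and far contributions (which are harmless since they only add to the numerator) — and to verify that the implied constants depend only on $s,p,N$ and not on the fineness $\ell$ of the decomposition.
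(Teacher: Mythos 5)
Your overall framework (cohomological-index min-max, compact embedding, tiling and covering by cubes) matches the paper's, but both halves of your asymptotic argument have problems.

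\emph{Lower bound.} You claim a Rayleigh-quotient bound $R\leq Cm^{sp/N}\ell^{-sp}$ on the span of $m$ modes per cube across $\sim|\Omega|\ell^{-N}$ cubes. This implicitly assumes the Gagliardo seminorm is additive over disjoint supports, as the $W^{1,p}_0$-seminorm is in the local case. It is not: for $u_i$ supported in disjoint pieces $\Omega_i$, the numerator $[\sum_i a_iu_i]_{s,p}^p$ carries all the cross-interaction integrals over $\Omega_i\times\Omega_j$, and the only elementary control is the triangle inequality $[\sum_i a_iu_i]_{s,p}\leq\sum_i|a_i|\,[u_i]_{s,p}$, which via H\"older costs a factor $m^{p-1}$ in the energy level. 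This is exactly the content of Lemma~\ref{genus-super}, and it is precisely what degrades the exponent from $N/sp$ to $N/(Np-N+sp)$. Your stated balance does not even produce the claimed exponent: $k\sim m|\Omega|\ell^{-N}$ together with $m^{sp/N}\ell^{-sp}\lesssim\lambda$ gives $k\lesssim|\Omega|\lambda^{N/sp}$ (the dependence on $\ell$ cancels), not $|\Omega|^{sp/(Np-N+sp)}\lambda^{N/(Np-N+sp)}$. You must insert the correction factor $m^{p-1}$ into the level whenever you sum trial functions across disjoint pieces, and then re-do the optimization as in the paper.

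\emph{Upper bound.} Your Poincar\'e--Wirtinger plus codimension route is genuinely different from the paper's co-genus sub-additivity (Lemma~\ref{cogenus-sub}), which is the source of the paper's weaker exponent $N/(sp-N)$ and the restriction $sp>N$. However, your scaling constant is off by a factor $\ell^N$: for a cube $Q$ of side $\ell$ and $u$ of zero mean on $Q$, Jensen's inequality together with $|x-y|\leq\sqrt N\,\ell$ on $Q\times Q$ gives
\[
\int_Q|u|^p\,dx\leq\frac{1}{|Q|}\int_{Q\times Q}|u(x)-u(y)|^p\,dx\,dy\leq N^{\frac{N+sp}{2}}\,\ell^{sp}\int_{Q\times Q}\frac{|u(x)-u(y)|^p}{|x-y|^{N+sp}}\,dx\,dy,
\]
i.e.\ the correct constant scales as $\ell^{sp}$, not $\ell^{sp-N}$, and no Morrey embedding or $sp>N$ is needed. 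Summing over the covering cubes (the global Gagliardo seminorm trivially dominates $\sum_i\int_{Q_i\times Q_i}$, which absorbs the tail issue you worried about) gives $R\geq c\,\ell^{-sp}$ on the codimension-$h$ subspace $V$, hence $\lambda_k\geq c\,\ell^{-sp}$ once $k>h\sim|\Omega|\ell^{-N}$, and balancing yields $\mathcal N(\lambda)\lesssim|\Omega|\lambda^{N/sp}$ --- the sharper conjectured estimate \eqref{asym}, for all $0<s<1$, $p>1$. Your version lands on the theorem's $N/(sp-N)$ only because of the erroneous power of $\ell$; so, as written, the agreement with the theorem is coincidental. You should either carefully verify the corrected argument (the index step --- an odd continuous map of any $A$ with $i(A)>h$ disjoint from $V$ into $S^{h-1}$ forces $i(A)\leq h$ --- is standard, and the per-cube domination is immediate) since it would strengthen the paper, or identify where it actually fails; either way, it does not prove the statement as worded.
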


\noindent
Consequently, for $k$ large and $sp>N$, we have
$$
C'_1|\Omega|^{-\frac{sp}{N}}k^\frac{sp-N}{N}\leq \lambda_k\leq C'_2|\Omega|^{-\frac{sp}{N}}k^\frac{Np-N+sp}{N},
$$
for some positive constants $C'_i$ depending only on $s$, $p$ and $N$ ($i=1,2$).
We suspect that, actually, the following sharper Weil-type law holds
\begin{equation}
\label{asym}
\tilde C_1|\Omega|\lambda^{N/sp}\leq \mathcal{N}(\lambda)\leq \tilde C_2|\Omega|\lambda^{N/sp},
\,\,\quad\text{$\lambda>0$ large},
\end{equation}
for some positive constants $\tilde C_i$ depending only on $s$, $p$ and $N$ ($i=1,2$). Indeed, \eqref{asym} implies both \eqref{asym-main1} and \eqref{asym-main2} (at least if $sp>N$), and we have
\begin{align*}
|\Omega|^\frac{sp}{Np-N+sp}\lambda^\frac{N}{Np-N+sp} &\sim|\Omega|\lambda^\frac{N}{sp}, \quad \text{for $p$ close to $1$,} \\
|\Omega|^\frac{sp}{sp-N}\lambda^\frac{N}{sp-N} &\sim|\Omega|\lambda^\frac{N}{sp}, \quad \text{for $p$ large.}
\end{align*}
Non-optimality of our estimates may be explained as follows. In computing asymptotic estimates of variational einenvalues, a crucial step consists in proving sub- and super-additivity properties for the genus and co-genus of sublevels of the Sobolev norm on a domain $\Omega$ which is union of a disjoint family of open subsets $\Omega_i$. In the classical case of $p$-Laplacian problems ($s=1$), this is easily performed due to the following  splitting properties of Sobolev norms: if $\Omega=\Omega_1\cup\Omega_2$ 
and $\Omega_1\cap \Omega_2=\emptyset$, 
$$
\|u_1+u_2\|_{W^{1,p}_0(\Omega)}^p=\|u_1\|_{W^{1,p}_0(\Omega_1)}^p+\|u_2\|_{W^{1,p}_0(\Omega_2)}^p,
\,\,\quad  u_i\in W^{1,p}_0(\Omega_i)\,\, (i=1,2).
$$
In the fractional case ($0<s<1$), in general we have
$$
[u_1+u_2]_{s,p}^p\neq [u_1]_{s,p}^p+[u_2]_{s,p}^p,
\,\,\quad  \text{$u_i \in W^{s,p}(\R^N)$ with $u_i=0$ a.e.\ in $\R^N\setminus\Omega_i$\,\,\,$ (i=1,2)$},
$$
due to the nonlocal character of the Gagliardo norm. This forces us to introduce some correction multipliers, which eventually produce the asymmetric estimates \eqref{asym-main1}-\eqref{asym-main2}.
\vskip2pt
\noindent
In the linear case $p=2$, a completely different approach is possibel: the explicit asymptotic behaviour of eigenvalues was
obtained recently by Frank $\&$ Geisinger \cite{FG} and Geisinger \cite{G} and in the one-dimensional case by Kwasnicki \cite{K}. These results are consistent with \eqref{asym}.
\vskip6pt
\noindent
The paper is organized as follows: in Section 2 we give a variational formulation of the problem and construct the sequence $(\lambda_k)$. In Section 3 we prove some technical lemmas on the Krasnoselskii genus and co-genus. In Section 4 we prove Theorem~\ref{main} (and $(v)$ above).

\medskip

\section{Construction of the variational eigenvalues}

\noindent
We first recall some basic notions from the Alexander-Spanier cohomology theory and introduce a cohomological index which goes back to Fadell $\&$ Rabinowitz \cite{FR}. Let $\mathcal{A}(X)$ denote the family of all nonempty, closed, symmetric subsets of a Banach space $X$, and for all $A\in\mathcal{A}(X)$, $B\in\mathcal{A}(X')$ we denote by $C_2(A,B)$ the set of all odd, continuous mappings $f:A\to B$. For all $A\in\mathcal{A}(X)$ we define the quotient space $\overline A=A/\Z_2$ and the classifying map $\varphi:\overline A\to\R P^\infty$ towards the infinite-dimensional projective space, which induces a homomorphism of cohomology rings $\varphi^*:H^*(\R P^\infty)\to H^*(\overline A)$. One can identify $H^*(\R P^\infty)$ with the polynomial ring $\Z_2[\omega]$ on a single generator $\omega$. Finally we define the {\it index} of $A$ as the positive integer
\[i(A)=\sup\{k\in\N : \ \varphi^*(\omega^{k-1})\neq 0\}.\]
We will not actually use much of index theory. All we need to know is that $i(S^{k-1})=k$ for all $k\in\N$ ($S^{k-1}$ denotes the unit sphere in $\R^k$, see \cite[Example 2.11]{PAO}) and that, if $A\in\mathcal{A}(X)$, $B\in\mathcal{A}(X')$ and $f\in C_2(A,B)$, then $i(A)\leq i(B)$ (see \cite[Proposition 2.12 $(i_2)$]{PAO}). We refer the reader to \cite{PAO} and to Motreanu, Motreanu $\&$ Papageorgiou \cite{MMP} for a detailed account of this subject.
\vskip2pt
\noindent
We also define the Krasnoselskii {\it genus} and {\it co-genus} by setting for all $A\in\mathcal{A}(X)$
\[\gamma^+(A)=\sup\{k\in\N : \ C_2(S^{k-1},A)\neq\emptyset\},\]
\[\gamma^-(A)=\inf\{k\in\N : \ C_2(A,S^{k-1})\neq\emptyset\}.\]
We have for all $A\in\mathcal{A}(X)$
\begin{equation}\label{ind-gen}
\gamma^+(A)\leq i(A)\leq\gamma^-(A).
\end{equation}
Indeed, for all $k\in\N$ for which there is a mapping $f\in C_2(S^{k-1},A)$, we have $i(A)\geq i(S^{k-1})=k$, hence $i(A)\geq\gamma^+(A)$. The second inequality is proved in a similar way.
\vskip2pt
\noindent
Now we turn to problem \eqref{prob} for which we provide a convenient variational formulation. For all measurable functions $u:\R^N\to\R$, we set
\[
\|u\|_{L^p(\R^N)}=\Big(\int_{\R^N}|u(x)|^p dx\Big)^\frac{1}{p},
\]
\[
[u]_{s,p}=\Big(\int_{\R^{2N}}\frac{|u(x)-u(y)|^p}{|x-y|^{N+sp}}dxdy\Big)^\frac{1}{p}.
\]
We define the fractional Sobolev space $W^{s,p}(\R^N)$ as the space of all functions $u\in L^p(\R^N)$ such that $[u]_{s,p}$
is finite and endow it with the norm
\[
\|u\|_{W^{s,p}(\R^N)}=\big(\|u\|_{L^p(\R^N)}^p+[u]_{s,p}^p\big)^\frac{1}{p}.
\]
We refer to \cite{DPV} for a description of fractional Sobolev spaces. Now we define a closed linear subspace of $W^{s,p}(\R^N)$:
\[
X(\Omega)=\left\{u\in W^{s,p}(\R^N):\, u=0 \,\, \mbox{a.e. in $\R^N\setminus\Omega$}\right\}.
\]
Clearly we can identify $\|\cdot\|_{L^p(\R^N)}$ and $\|\cdot\|_{L^p(\Omega)}$ on $X(\Omega)$. By using
\cite[Theorem 7.1]{DPV},  it is readily seen that the following Poincar\'e-type inequality holds:
\begin{equation}\label{poinc}
\|u\|_{L^p(\Omega)} \leq \lambda_1^{-\frac{1}{p}} [u]_{s,p},
\qquad\text{ for all $u\in X(\Omega)$ ($\lambda_1>0$).}
\end{equation}
Thus, we can equivalently renorm $X(\Omega)$ by setting $\|u\|_{X(\Omega)}=[u]_{s,p}$ for every $u\in X(\Omega)$. 
So, $(X(\Omega),\|\cdot\|_{X(\Omega)})$ is a uniformly convex (in particular, reflexive) Banach space. 
In fact, we have the linear isometry $F:X(\Omega)\to L^p(\R^{2N})$ defined, for all $u\in X(\Omega)$, by
\[
F(u)(x,y)=\frac{u(x)-u(y)}{|x-y|^{N/p+s}},
\,\,\,\quad (x,y)\in\R^{2N}.
\]
Whence, $F(X(\Omega))$ is uniformly convex as a linear subspace of $L^p(\R^{2N})$. 
Hence $X(\Omega)$ is uniformly convex too.
We denote by $X(\Omega)^*$ the topological dual of $X(\Omega)$ and we define a nonlinear operator 
$A:X(\Omega)\to X(\Omega)^*$ by setting for all $u,v\in X(\Omega)$
\[
\langle A(u),v\rangle=\int_{\R^{2N}}\frac{|u(x)-u(y)|^{p-2}(u(x)-u(y))(v(x)-v(y))}{|x-y|^{N+sp}}dxdy.
\]
Clearly $A$ is $(p-1)$-homogeneous and odd, a potential operator, satisfies for all $u,v\in X(\Omega)$
$$
\langle A(u),u\rangle=\|u\|_{X(\Omega)}^p,
\qquad
|\langle A(u),v\rangle|\leq \|u\|_{X(\Omega)}^{p-1}\|v\|_{X(\Omega)},
$$
hence by the uniform convexity of $X(\Omega)$
it enjoys the $(S)$-property, that is, whenever $(u_n)$ is a sequence in $X(\Omega)$ such that 
$u_n\rightharpoonup u$ in $X(\Omega)$ and $\langle A(u_n),u_n-u\rangle\to 0$, then 
$u_n\to u$ in $X(\Omega)$ (see\ \cite[Proposition 1.3]{PAO}).
\vskip2pt
\noindent
We set for all $u\in X(\Omega)$
\[I(u)=\|u\|_{L^p(\Omega)}^p, \quad J(u)=[u]_{s,p}^p.\]
Besides, we set
\[S=\big\{u\in X(\Omega):\, I(u)=1\big\}.\]
Clearly $I\in C^1(X(\Omega))$, hence $S$ is a $C^1$-Finsler manifold. Besides, $J\in C^1(X(\Omega))$ and for every $u,v\in X(\Omega)$
\[\langle J'(u),v\rangle=p\langle A(u),v\rangle.\]
We denote by $\tilde J$ the restriction of $J$ to $S$. For all $\lambda>0$, $\lambda$ is a critical value of $\tilde J$ if and only if it is an 
eigenvalue of \eqref{prob}. Indeed, if there exists $u\in S$ and $\mu\in\R$ such that $J(u)=\lambda$ 
and $J'(u)-\mu I'(u)=0$ in $X(\Omega)^*$, then for all $v\in X(\Omega)$ we have
\[
\int_{\R^{2N}}\frac{|u(x)-u(y)|^{p-2}(u(x)-u(y))(v(x)-v(y))}{|x-y|^{N+sp}}dxdy=\mu\int_\Omega |u(x)|^{p-2}u(x) v(x) dx,
\]
hence (taking $v=u$) $\lambda=\mu$. So, $u\neq 0$ satisfies \eqref{var}. Vice versa, 
if $\lambda$ is an eigenvalue of \eqref{prob}, then we can find 
a $\lambda$-eigenfunction $u\in X(\Omega)$ 
with $I(u)=1$. So, $u\in S$ is a critical point of $\tilde J$ at level $\lambda$ (see \cite[Proposition 3.54]{PAO}).
\vskip2pt
\noindent
Now we define the sequence $(\lambda_k)$. We denote by $\mathcal{F}$ the family of all nonempty, closed, symmetric subsets of $S$ and for all $k\in\N$ we set
\[\mathcal{F}_k=\{A\in\mathcal{F} : \ i(A)\geq k\}\]
and
\begin{equation}\label{min-max}
\lambda_k=\inf_{A\in\mathcal{F}_k}\sup_{u\in A} J(u)
\end{equation}
(this min-max formula differs from the classical ones by the use of the index in the place of the genus). Clearly, since $\mathcal{F}_{k+1}\subseteq\mathcal{F}_k$ for all $k\in\N$, the sequence $(\lambda_k)$ is non-decreasing. In particular (recalling that $J$ is even) we have
\[\lambda_1=\inf_{u\in S}J(u)=\inf_{u\in X(\Omega)\setminus\{0\}}\frac{[u]_{s,p}^p}{\|u\|_{L^p(\Omega)}^p},\]
hence $\lambda_1$ coincides with the first eigenvalue mentioned in the Introduction and in \eqref{poinc}.

\begin{prop}\label{ps}
The functional $\tilde J$ satisfies the Palais-Smale condition at any level $c\in\R$.
\end{prop}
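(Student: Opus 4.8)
The plan is to verify the Palais--Smale condition straight from the definition. Let $(u_n)\subset S$ satisfy $\tilde J(u_n)=[u_n]_{s,p}^p\to c$ and $\tilde J'(u_n)\to 0$ in the cotangent bundle of $S$; I must produce a subsequence converging in $S$. Since $\|\cdot\|_{X(\Omega)}=[\cdot]_{s,p}$, the condition $[u_n]_{s,p}^p\to c$ already makes $(u_n)$ bounded in the reflexive space $X(\Omega)$, so up to a subsequence $u_n\rightharpoonup u$ in $X(\Omega)$. The first ingredient is the compactness of the embedding $X(\Omega)\hookrightarrow L^p(\Omega)$: extending functions of $X(\Omega)$ by zero and applying \cite[Theorem 7.1]{DPV} (valid since $\Omega$ is bounded with Lipschitz boundary) yields $u_n\to u$ strongly in $L^p(\Omega)$. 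In particular $I(u)=\lim_n I(u_n)=1$, so $u\in S$.

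Next I would pass from the constrained condition $\tilde J'(u_n)\to 0$ to a statement in $X(\Omega)^*$ by the Lagrange multiplier rule on the $C^1$-Finsler manifold $S=I^{-1}(1)$ (cf.\ the discussion preceding \eqref{min-max}, where the same device identifies critical points of $\tilde J$ with eigenfunctions): there exist $\mu_n\in\R$ with $J'(u_n)-\mu_n I'(u_n)\to 0$ in $X(\Omega)^*$. Evaluating this at $u_n$ and using $\langle J'(u_n),u_n\rangle=p[u_n]_{s,p}^p\to pc$, $\langle I'(u_n),u_n\rangle=pI(u_n)=p$, together with the boundedness of $(u_n)$, gives $\mu_n\to c$, hence $(\mu_n)$ is bounded. (Incidentally this forces $c=\lim_n[u_n]_{s,p}^p\geq\lambda_1>0$: were $c=0$, the Poincar\'e inequality \eqref{poinc} would give $\|u_n\|_{L^p(\Omega)}\to 0$, contradicting $I(u_n)=1$; so the condition is nontrivial only at levels $c>0$.)

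Then I would test $J'(u_n)-\mu_n I'(u_n)\to 0$ against $u_n-u$. Since $J'(u_n)=pA(u_n)$, this reads
\[
p\langle A(u_n),u_n-u\rangle=\mu_n\langle I'(u_n),u_n-u\rangle+o(1).
\]
Now $|u_n|^{p-2}u_n$ is bounded in $L^{p/(p-1)}(\Omega)$ while $u_n-u\to 0$ in $L^p(\Omega)$, so $\langle I'(u_n),u_n-u\rangle=p\int_\Omega|u_n|^{p-2}u_n(u_n-u)\,dx\to 0$; together with the boundedness of $(\mu_n)$ this yields $\langle A(u_n),u_n-u\rangle\to 0$. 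Invoking the $(S)$-property of $A$ (a consequence of the uniform convexity of $X(\Omega)$, recorded above), from $u_n\rightharpoonup u$ and $\langle A(u_n),u_n-u\rangle\to 0$ we obtain $u_n\to u$ strongly in $X(\Omega)$, and since $u\in S$ this closes the argument.

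The only genuinely delicate point I anticipate is the compact embedding $X(\Omega)\hookrightarrow L^p(\Omega)$ for a merely Lipschitz domain: one has to check that the extension-by-zero operator maps $X(\Omega)$ into $W^{s,p}$ of a larger bounded extension domain and then apply the Rellich-type compactness of \cite[Theorem 7.1]{DPV} there. Everything else --- boundedness via the norm identity, the Lagrange multiplier bookkeeping on $S$, and the final $(S)$-property step --- is routine.
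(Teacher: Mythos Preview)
Your argument is correct and matches the paper's proof step for step: boundedness from $[u_n]_{s,p}^p\to c$, weak compactness by reflexivity, strong $L^p$-convergence via \cite[Theorem~7.1]{DPV}, the Lagrange multipliers $\mu_n\to c$, and finally the $(S)$-property of $A$. Your closing worry about extension-by-zero is moot, since by definition elements of $X(\Omega)$ already lie in $W^{s,p}(\R^N)$ and vanish a.e.\ outside $\Omega$, so no extension step is needed before invoking the compact embedding.
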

\begin{proof}
Let $(u_n)$ and $(\mu_n)$ be sequences in $S$ and
$\R$ respectively, such that $J(u_n)\to c$ as $n\to\infty$
and $J'(u_n)-\mu_n I'(u_n)\to 0$ in $X(\Omega)^*$ as $n\to\infty$. 
Then, $(u_n)$ is bounded in $X(\Omega)$. Passing to a subsequence, 
we find $u\in X(\Omega)$ such that $u_n\rightharpoonup u$ in $X(\Omega)$ as $n\to\infty$
and $u_n\to u$ strongly in $L^p(\Omega)$ as $n\to\infty$, in light of~\cite[Theorem 7.1]{DPV}. 
In particular, $u\in S$. Moreover, 
\[\mu_n=J(u_n)+o(1)\to c.\]
Notice that, for all $n\in\N$, we have 
\begin{align*}
\left|\langle A(u_n),u_n-u\rangle\right| &= \left|\mu_n\int_\Omega |u_n(x)|^{p-2}u_n(x) (u_n(x)-u(x))dx\right|+o(1) \\
&\leq \left|\mu_n\right|\|u_n-u\|_{L^p(\Omega)}+o(1),
\end{align*}
and the latter vanishes as $n\to\infty$. 
Hence, by the $(S)$-property of $A$, we get $u_n\to u$ in $X(\Omega)$.
\end{proof}

\noindent
We have the following result for the sequence defined in \eqref{min-max}:

\begin{prop}\label{eigen}
For all $k\in\N$, $\lambda_k$ is an eigenvalue of problem \eqref{prob}. Moreover, $\lambda_k \to\infty$.
\end{prop}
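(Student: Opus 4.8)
The plan is to verify the hypotheses of a standard min-max critical point theorem for even functionals on the Finsler manifold $S$ with the cohomological index replacing the genus, and then to extract from it that each $\lambda_k$ is a critical value of $\tilde J$ — which, by the discussion preceding Proposition \ref{ps}, is the same as being an eigenvalue of \eqref{prob}. First I would record the ingredients already in place: $\tilde J$ is even, of class $C^1$ on $S$, bounded below (indeed $\tilde J\geq\lambda_1>0$ by the Poincaré inequality \eqref{poinc}), and satisfies the Palais-Smale condition at every level by Proposition \ref{ps}. The family $\mathcal F_k$ of closed symmetric subsets of $S$ with index $\geq k$ is nonempty for every $k$: since $S$ contains, for each finite-dimensional subspace $V\subset X(\Omega)$ of dimension $k$, a set homeomorphic by an odd homeomorphism to $S^{k-1}$ (namely $S\cap V$, radially projected), monotonicity of the index under odd maps gives $i(S\cap V)\geq i(S^{k-1})=k$, so $S\cap V\in\mathcal F_k$ and $\lambda_k<\infty$. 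Also $\mathcal F_{k+1}\subseteq\mathcal F_k$, so $(\lambda_k)$ is non-decreasing, and $\lambda_k\geq\lambda_1>0$.

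Next I would invoke the deformation lemma on the $C^1$-Finsler manifold $S$: if $c=\lambda_k$ were a regular value of $\tilde J$, then by Palais-Smale and the deformation lemma there would be $\eps>0$ and an odd homeomorphism $\eta$ of $S$ with $\eta(\tilde J^{c+\eps})\subseteq\tilde J^{c-\eps}$, where $\tilde J^a=\{u\in S:\tilde J(u)\leq a\}$. Picking $A\in\mathcal F_k$ with $\sup_A\tilde J<c+\eps$, the set $\eta(A)$ is again closed and symmetric, and by monotonicity of the index under the odd homeomorphism $\eta$ we get $i(\eta(A))\geq i(A)\geq k$, hence $\eta(A)\in\mathcal F_k$; but $\sup_{\eta(A)}\tilde J\leq c-\eps<\lambda_k$, contradicting \eqref{min-max}. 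Therefore each $\lambda_k$ is a critical value of $\tilde J$, i.e.\ an eigenvalue of \eqref{prob}.

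Finally, for $\lambda_k\to\infty$, suppose instead that $\lambda_k\nearrow\lambda^*<\infty$. Then all the critical values $\lambda_k$ accumulate at $\lambda^*$; by the Palais-Smale condition at level $\lambda^*$ the set $K^*$ of critical points of $\tilde J$ at level $\lambda^*$ is compact. Using the index of a neighbourhood of $K^*$ — which is finite, since for a compact symmetric subset of a Banach space $i$ is finite and is stable under small symmetric enlargements — one derives, again via the deformation lemma applied on an annular region $\lambda^*-\eps\leq\tilde J\leq\lambda^*+\eps$ avoiding $K^*$, that the index of sublevels cannot jump by more than a fixed finite amount across $\lambda^*$; this caps the number of $k$ with $\lambda_k$ close to $\lambda^*$, contradicting $\lambda_k\nearrow\lambda^*$ for infinitely many $k$. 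Hence $\lambda_k\to\infty$. The main obstacle I expect is the bookkeeping in this last step: making the "index jump is finite" argument precise on a Finsler manifold (rather than a linear space) requires care with the deformation near $K^*$ and with the subadditivity $i(B_1\cup B_2)\leq i(B_1)+i(B_2)$ of the index; alternatively, one can sidestep it by citing the abstract min-max theorem for the cohomological index in \cite{PAO}, under whose hypotheses (Palais-Smale, $\tilde J$ bounded below, $\mathcal F_k\neq\emptyset$ for all $k$) the conclusion that $\lambda_k\to\infty$ is part of the statement.
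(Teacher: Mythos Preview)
Your argument that each $\lambda_k$ is a critical value of $\tilde J$ is essentially identical to the paper's: both proceed by contradiction, invoke the odd deformation lemma on the $C^1$-Finsler manifold $S$ (the paper cites Bonnet \cite{B}), and use monotonicity of the index under odd continuous maps to push a near-optimal $A\in\mathcal F_k$ below $\lambda_k$. Your extra verification that $\mathcal F_k\neq\emptyset$ via finite-dimensional subspaces is a useful addition the paper leaves implicit.

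For $\lambda_k\to\infty$ you take a genuinely different route. The paper dispatches this in one line: ``since $i(S)=\infty$ and $\sup_S J=\infty$, we easily draw $\lambda_k\to\infty$.'' The underlying idea is simply that each sublevel set $\tilde J^c=M_0^c(\Omega)$ has \emph{finite} index: it is bounded in $X(\Omega)$, hence (by the compact embedding $X(\Omega)\hookrightarrow L^p(\Omega)$) relatively compact in $L^p(\Omega)$, symmetric, and bounded away from $0$ in $L^p$; any odd $L^p$-continuous map from such a set into some $S^{k-1}$ is automatically $X(\Omega)$-continuous, so $\gamma^-(\tilde J^c)<\infty$ and hence $i(\tilde J^c)<\infty$. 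If the $\lambda_k$ were bounded by $\Lambda$, then $i(\tilde J^{\Lambda+1})\geq k$ for every $k$, a contradiction. Your approach instead assumes $\lambda_k\nearrow\lambda^*<\infty$, uses Palais--Smale to get compactness of the critical set $K^*$ at level $\lambda^*$, and argues via subadditivity of the index and a deformation avoiding $K^*$ that the index can only jump finitely across $\lambda^*$. This is correct in spirit but, as you yourself note, the bookkeeping on a Finsler manifold is delicate; the paper's sublevel-set argument (or, equivalently, the direct citation of the abstract min-max theorem in \cite{PAO} that you mention as an alternative) avoids that entirely and is both shorter and cleaner.
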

\begin{proof}
We equivalently prove that $\lambda_k$ is a critical value of $\tilde J$, arguing by contradiction. Assume $\lambda_k$ is a regular value of $\tilde J$. Then, since $\tilde J$ satisfies the Palais-Smale condition by Proposition \ref{ps}, there exist a real $\eps>0$ and an odd homeomorphism $\eta:S\to S$ such that $J(\eta(u))\leq\lambda_k-\eps$ for all $u\in S$ with $J(u)\leq\lambda_k+\eps$ (see Bonnet \cite[Theorem 2.5]{B}). 
We can find $A\in\mathcal{F}_k$ such that $\sup_A J<\lambda_k+\eps$. Set $B=\eta(A)$, then $B\in\mathcal{F}$ and $i(B)\geq i(A)$, so $B\in\mathcal{F}_k$. We have for all $\sup_B J\leq\lambda_k-\eps$, which contradicts \eqref{min-max}.
\vskip2pt
\noindent
Finally, since $i(S)=\infty$ and $\sup_S J=\infty$, we easily draw $\lambda_k\to\infty$.
\end{proof}

\section{Preparatory results}

\noindent
We introduce some notation: for all $\Omega'\subset\R^N$ and for all $\lambda>0$, we set
\begin{align*}
M_0^\lambda(\Omega') & =\Big\{u\in X(\Omega'): \ \|u\|_{L^p(\Omega')}^p=1, \ [u]_{s,p}^p\leq\lambda\Big\}, \\
M^\lambda(\Omega')& =\Big\{u\in W^{s,p}(\R^N) : \ \|u\|_{L^p(\Omega')}^p=1, \ [u]_{s,p}^p\leq\lambda\Big\}.
\end{align*}
In order to prove our asymptotic estimate we need some information about the dependence of the genus and co-genus of sub-level sets of the types above, with respect to the domain and the level. We begin with a monotonicity property:

\begin{lem}\label{monot}
Assume that $\Omega\subseteq\Omega'$ and $0<\mu\leq\mu' $. Then
\[\gamma^+(M^\mu_0(\Omega))\leq \gamma^+(M^{\mu'}_0(\Omega')), \quad \gamma^-(M^\mu(\Omega))\leq \gamma^-(M^{\mu'}(\Omega')).\]
\end{lem}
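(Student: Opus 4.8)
The plan is to exhibit the required odd maps directly, using natural inclusions. For the genus inequality, the key observation is that there is a continuous odd injection $M^\mu_0(\Omega)\hookrightarrow M^{\mu'}_0(\Omega')$ given simply by extension by zero: if $u\in X(\Omega)$ vanishes a.e.\ outside $\Omega$, then the same function (viewed on $\R^N$) lies in $X(\Omega')$ since $\Omega\subseteq\Omega'$; moreover $\|u\|_{L^p(\Omega')}^p=\|u\|_{L^p(\Omega)}^p=1$ because $u$ is supported in $\Omega$, and $[u]_{s,p}^p\leq\mu\leq\mu'$ is unchanged (the Gagliardo seminorm is computed over all of $\R^{2N}$ and does not see the domain). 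This inclusion $j$ is continuous, odd, and maps $M^\mu_0(\Omega)$ into $M^{\mu'}_0(\Omega')$. Now, for any $k\in\N$ with $C_2(S^{k-1},M^\mu_0(\Omega))\neq\emptyset$, composing such a map $f$ with $j$ gives $j\circ f\in C_2(S^{k-1},M^{\mu'}_0(\Omega'))$, so the latter set is nonempty as well; taking the supremum over such $k$ yields $\gamma^+(M^\mu_0(\Omega))\leq\gamma^+(M^{\mu'}_0(\Omega'))$.

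For the co-genus inequality the roles are reversed: I would produce an odd continuous map from the larger set to the smaller one. If $v\in W^{s,p}(\R^N)$ satisfies $\|v\|_{L^p(\Omega)}^p=1$ and $[v]_{s,p}^p\leq\mu$, then a fortiori $\|v\|_{L^p(\Omega')}^p=\|v\|_{L^p(\Omega)}^p+\|v\|_{L^p(\Omega'\setminus\Omega)}^p\ge 1$ — but this does not land in $M^{\mu'}(\Omega')$, so a plain inclusion is the wrong direction. Instead, observe that there is an odd continuous map $r:M^{\mu'}(\Omega')\to M^\mu(\Omega)$ is also not immediate. The cleaner route is to note that the monotonicity for $\gamma^-$ follows from an inclusion in the other direction: restricting attention to $\Omega\subseteq\Omega'$ and $\mu\le\mu'$, we have $M^\mu(\Omega)\subseteq M^{\mu'}(\Omega)$ trivially, and then we need $\gamma^-(M^{\mu'}(\Omega))\le\gamma^-(M^{\mu'}(\Omega'))$, for which one again seeks an odd map $M^{\mu'}(\Omega')\to M^{\mu'}(\Omega)$. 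I would obtain it by multiplying by a fixed cutoff $\psi\in C^\infty_c(\R^N)$ with $\psi\equiv 1$ on a ball containing $\overline{\Omega}$ and then renormalizing in $L^p(\Omega)$; the map $v\mapsto \psi v/\|\psi v\|_{L^p(\Omega)}$ is odd and continuous (the denominator is bounded below since $\|v\|_{L^p(\Omega)}=1$ and $\psi\equiv 1$ there), and one checks the Gagliardo seminorm of $\psi v$ is controlled by a constant times $\mu'$ using the algebra/fractional Leibniz estimate in \cite{DPV}; after absorbing constants this may force a mild adjustment of the level, which is harmless since $\gamma^-$ is monotone in the level.

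Actually, the simplest and most robust argument for $\gamma^-$ is to invoke \eqref{ind-gen} together with the monotonicity of the index under odd maps: since $M^\mu(\Omega)\subseteq M^{\mu'}(\Omega')$ is a genuine inclusion once one extends functions by zero — here note that a function in $M^\mu(\Omega)$ need not vanish outside $\Omega$, so this requires care — I would instead argue abstractly. If $g\in C_2(M^{\mu'}(\Omega'),S^{k-1})$ realizes (nearly) $\gamma^-(M^{\mu'}(\Omega'))$, then precomposing with the inclusion $M^\mu(\Omega)\hookrightarrow M^{\mu'}(\Omega')$ — valid because $\Omega\subseteq\Omega'$ forces $\|v\|_{L^p(\Omega')}\ge\|v\|_{L^p(\Omega)}=1$, which is \emph{not} equal to $1$ in general; this is precisely the subtlety. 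The resolution: define the inclusion only when $v$ is supported in $\Omega$; for $M^\mu$ this is false, so one must use the renormalized cutoff map above. I expect the main obstacle to be exactly this: showing that the cutoff-and-renormalize map keeps the Gagliardo seminorm bounded (so that it lands in some $M^{C\mu}$), which needs a fractional product estimate; once that is in hand, composing with $g$ and using that $\gamma^-$ only depends on the existence of an odd map to a sphere — together with the level-monotonicity $\gamma^-(M^\mu(\Omega))\le\gamma^-(M^{C\mu}(\Omega))$ absorbed on the left — completes the proof. Everything else is the formal bookkeeping of suprema and infima in the definitions of $\gamma^\pm$.
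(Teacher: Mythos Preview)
Your argument for the genus inequality $\gamma^+$ is correct and matches the paper's: the inclusion $M^\mu_0(\Omega)\subseteq M^{\mu'}_0(\Omega')$ is immediate (extension by zero preserves both the $L^p$-normalization and the Gagliardo seminorm), and composing with it is all that is needed.

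For the co-genus inequality you have correctly located the issue but missed the simple resolution. You want an odd continuous map $M^\mu(\Omega)\to M^{\mu'}(\Omega')$ (so that any $g\in C_2(M^{\mu'}(\Omega'),S^{k-1})$ pulls back). You observe that for $u\in M^\mu(\Omega)$ one has $\|u\|_{L^p(\Omega')}\ge \|u\|_{L^p(\Omega)}=1$, and you stop there as if this were an obstruction. In fact it is the key: define
\[
f(u)=\frac{u}{\|u\|_{L^p(\Omega')}}.
\]
Then $\|f(u)\|_{L^p(\Omega')}=1$ and, since $\|u\|_{L^p(\Omega')}\ge 1$,
\[
[f(u)]_{s,p}^p=\frac{[u]_{s,p}^p}{\|u\|_{L^p(\Omega')}^p}\le [u]_{s,p}^p\le\mu\le\mu',
\]
so $f\in C_2(M^\mu(\Omega),M^{\mu'}(\Omega'))$ exactly, with no loss of constants. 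This is the paper's argument. Your cutoff-and-renormalize route is unnecessary, and worse, it does not prove the lemma as stated: the fractional product estimate produces a constant $C>1$ depending on the cutoff, so you would only get $\gamma^-(M^\mu(\Omega))\le\gamma^-(M^{C\mu'}(\Omega'))$, which is strictly weaker than what is claimed.
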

\begin{proof}
The first inequality follows immediately from $M^\mu_0(\Omega)\subseteq M^{\mu'}_0(\Omega')$. Consider the mapping $f:M^\mu(\Omega)\to M^{\mu'}(\Omega')$ defined by 
$$
f(u)=\|u\|_{L^p(\Omega')}^{-1}u.
$$
Then, for every $u\in W^{s,p}(\R^N)$ with $\|u\|_{L^p(\Omega)}^p=1$ and $[u]_{s,p}^p\leq\mu$ we
have $\|f(u)\|_{L^p(\Omega')}=1$ and 
$$
[f(u)]_{s,p}^p=\|u\|_{L^p(\Omega')}^{-p}[u]_{s,p}^p\leq \|u\|_{L^p(\Omega)}^{-p}\mu\leq\mu'.
$$
Hence $f\in C_2(M^\mu(\Omega),M^{\mu'}(\Omega'))$, which proves the assertion.
\end{proof}

\noindent
We prove that the genus is (up to a correction factor) super-additive with respect to the domain:

\begin{lem}\label{genus-super}
If $\Omega_1,\ldots\Omega_m\subset\R^N$ are bounded domains with Lipschitz boundaries, such that $\Omega_i\cap\Omega_j=\emptyset$ for all $i\neq j$ and $\cup_{i=1}^m \overline{\Omega_i}=\overline\Omega$, then for all $\mu>0$
\[\sum_{i=1}^m \gamma^+(M^\mu_0(\Omega_i))\leq\gamma^+(M^{m^{p-1}\mu}_0(\Omega)).\]
\end{lem}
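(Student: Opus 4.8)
The plan is to realize the inequality by exhibiting an odd continuous map from a sphere of dimension $k-1$, where $k=\sum_{i=1}^m\gamma^+(M_0^\mu(\Omega_i))$, into the sublevel set $M_0^{m^{p-1}\mu}(\Omega)$; once such a map is produced, the conclusion $\gamma^+(M_0^{m^{p-1}\mu}(\Omega))\ge k$ is immediate from the definition of $\gamma^+$. First a couple of harmless reductions: if $\gamma^+(M_0^\mu(\Omega_i))=\infty$ for some $i$, then Lemma~\ref{monot} (applied with $\Omega_i\subseteq\Omega$ and $\mu\le m^{p-1}\mu$, as $m\ge 1$ and $p>1$) already gives $\gamma^+(M_0^{m^{p-1}\mu}(\Omega))=\infty$, and if $\gamma^+(M_0^\mu(\Omega_i))=0$ that index contributes nothing to the sum. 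So we may assume each $k_i:=\gamma^+(M_0^\mu(\Omega_i))$ is a positive integer; fix an odd continuous map $f_i\in C_2(S^{k_i-1},M_0^\mu(\Omega_i))$ for each $i$, and identify $S^{k-1}$ with the unit sphere of $\R^{k_1}\times\cdots\times\R^{k_m}$.

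Next I would extend each $f_i$ to all of $\R^{k_i}$ by radial scaling: set $\phi_i(0)=0$ and $\phi_i(y)=|y|^{2/p}f_i\!\left(y/|y|\right)$ for $y\neq 0$, so that $\phi_i\colon\R^{k_i}\to X(\Omega_i)$ is odd. Since $\|f_i(\xi)\|_{L^p(\Omega_i)}=1$ and $[f_i(\xi)]_{s,p}\le\mu^{1/p}$ uniformly for $\xi\in S^{k_i-1}$, one checks that $\phi_i$ is continuous — in particular at the origin, because $\|\phi_i(y)\|_{X(\Omega)}=[\phi_i(y)]_{s,p}\le|y|^{2/p}\mu^{1/p}\to0$ — and that $\|\phi_i(y)\|_{L^p(\Omega_i)}^p=|y|^2$ and $[\phi_i(y)]_{s,p}^p\le|y|^2\mu$. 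Since $\Omega_i\subseteq\Omega$ gives $X(\Omega_i)\subseteq X(\Omega)$, the formula
\[
g(y_1,\dots,y_m)=\sum_{i=1}^m\phi_i(y_i)
\]
then defines an odd continuous map $g\colon S^{k-1}\to X(\Omega)$.

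It remains to verify that $g$ takes values in $M_0^{m^{p-1}\mu}(\Omega)$. As the $\Omega_i$ are pairwise disjoint and $\Omega\setminus\bigcup_i\Omega_i\subseteq\bigcup_i\partial\Omega_i$ is Lebesgue-null (each $\Omega_i$ being Lipschitz), the functions $\phi_i(y_i)$ have pairwise essentially disjoint supports, whence for $(y_1,\dots,y_m)\in S^{k-1}$
\[
\|g(y_1,\dots,y_m)\|_{L^p(\Omega)}^p=\sum_{i=1}^m\|\phi_i(y_i)\|_{L^p(\Omega_i)}^p=\sum_{i=1}^m|y_i|^2=1 .
\]
For the Gagliardo seminorm, write $g(y_1,\dots,y_m)(x)-g(y_1,\dots,y_m)(x')=\sum_{i=1}^m\bigl(\phi_i(y_i)(x)-\phi_i(y_i)(x')\bigr)$, apply the convexity estimate $|a_1+\cdots+a_m|^p\le m^{p-1}(|a_1|^p+\cdots+|a_m|^p)$, valid since $p>1$, and integrate against $|x-x'|^{-N-sp}\,dx\,dx'$ to get
\[
[g(y_1,\dots,y_m)]_{s,p}^p\le m^{p-1}\sum_{i=1}^m[\phi_i(y_i)]_{s,p}^p\le m^{p-1}\sum_{i=1}^m|y_i|^2\mu=m^{p-1}\mu .
\]
Thus $g(S^{k-1})\subseteq M_0^{m^{p-1}\mu}(\Omega)$, so $C_2\bigl(S^{k-1},M_0^{m^{p-1}\mu}(\Omega)\bigr)\neq\emptyset$ and therefore $\gamma^+(M_0^{m^{p-1}\mu}(\Omega))\ge k=\sum_{i=1}^m\gamma^+(M_0^\mu(\Omega_i))$.

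The only real obstacle is precisely the one flagged in the introduction — the Gagliardo seminorm is not additive over the pieces $\Omega_i$ — and it is absorbed by the convexity inequality in the last display, which is exactly what forces one to enlarge the admissible level from $\mu$ to $m^{p-1}\mu$, producing the correction factor in the statement. Everything else (well-definedness and continuity of $\phi_i$ at the origin, oddness, the disjoint-support computation for the $L^p$ norm) is routine.
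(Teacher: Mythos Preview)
Your proof is correct and follows essentially the same construction as the paper: both build the odd continuous map $S^{k-1}\to M_0^{m^{p-1}\mu}(\Omega)$ by writing $\xi=(\xi_1,\dots,\xi_m)$ and sending it to $\sum_i |\xi_i|^{2/p} f_i(\xi_i/|\xi_i|)$, with the same $L^p$-normalization and continuity analysis. The only cosmetic difference is in the seminorm estimate: the paper uses the triangle inequality for $[\cdot]_{s,p}$ together with H\"older's inequality $\sum_i t_i^{2/p}\le m^{(p-1)/p}$, whereas you apply the convexity bound $|\sum a_i|^p\le m^{p-1}\sum|a_i|^p$ directly to the integrand---both routes land on the same factor $m^{p-1}$.
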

\begin{proof}
Avoiding trivial cases, we assume $\gamma^+(M^\mu_0(\Omega_i))=k_i\in\N$ and $f_i\in C_2(S^{k_i-1},M^\mu_0(\Omega_i))$ ($i=1,\ldots m$). Set $k=k_1+\ldots k_m$. For all $\xi\in S^{k-1}$ we set $\xi=(\xi_1,\ldots \xi_m)$ with $\xi_i\in\R^{k_i}$ and $|\xi_i|=t_i\in[0,1]$ ($i=1,\ldots m$). Clearly $t_1^2+\ldots t_m^2=1$. For all $1\leq i\leq m$ define $u_i\in X(\Omega_i)$ by setting
\[u_i=
\begin{cases}
f_i(\xi_i/t_i) & \text{if $t_i>0$,} \\
0 & \text{if $t_i=0$.}
\end{cases}\]
Hence $\|u_i\|_{L^p(\Omega_i)}$ is either $0$ or $1$ (according to either $t_i=0$ or $t_i>0$) and $[u_i]_{s,p}^p\leq\mu$. Set
\[f(\xi)=\sum_{i=1}^m t_i^\frac{2}{p}u_i.\]
Clearly $f(\xi)\in X(\Omega)$. Moreover
\[\|f(\xi)\|_{L^p(\Omega)}^p=\sum_{i=1}^m t_i^2\|u_i\|_{L^p(\Omega_i)}^p=\sum_{i=1}^m t_i^2=1\]
and a simple calculation shows
\[[f(\xi)]_{s,p}\leq\sum_{i=1}^m t_i^\frac{2}{p}[u_i]_{s,p}\leq\mu^\frac{1}{p}\sum_{i=1}^{m} t_i^\frac{2}{p}\leq m^\frac{p-1}{p}\mu^\frac{1}{p},\]
whence $[f(\xi)]_{s,p}^p\leq m^{p-1}\mu$. It is easily seen that the mapping $f:S^{k-1}\to M^{m^{p-1}\mu}_0(\Omega)$ is odd. Continuity is a more delicate matter. Let $(\xi^n)$ be a sequence in $S^{k-1}$ with $\xi^n\to\xi$ and denote $f(\xi^n)=u^n$, $f(\xi)=u$. Clearly $\xi^n_i\to\xi_i$ and $t^n_i\to t_i$ for all $1\leq i\leq m$ (with the obvious notation). So, for all $1\leq i\leq m$ one of the following cases occurs:
\begin{itemize}
\item if $t_i>0$, then $t^n_i>0$ for $n\in\N$ big enough and $u_i^n=f_i(\xi_i^n/t_i^n)$, so in $X(\Omega)$
\[\lim_n (t_i^n)^\frac{2}{p}u_i^n=\lim_n (t_i^n)^\frac{2}{p}f_i(\xi_i^n/t_i^n)=(t_i)^\frac{2}{p}f_i(\xi_i/t_i)=(t_i)^\frac{2}{p}u_i;\]
\item if $t_i=0$ and $t_i^n>0$ for $n\in\N$ big enough, then
\[(t_i^n)^\frac{2}{p}[u_i^n]_{s,p}\leq (t_i^n)^\frac{2}{p}\mu^\frac{1}{p},\]
and the latter tends to $0$ as $n\to\infty$, so $(t_i^n)^{2/p}u_i^n\to 0$ in $X(\Omega)$;
\item if $t_i=0$ and there exists a relabeled sequence such that $t_i^n=0$ for $n\in\N$ big enough, then clearly $(t_i^n)^{2/p}u_i^n=0$, and, reasoning as above, we conclude that $(t_i^n)^{2/p}u_i^n\to 0$ in $X(\Omega)$.
\end{itemize}
Thus, we have $u^n\to u$ in $X(\Omega)$, hence $f\in C_2(S^{k-1},M_0^{m^{p-1}\mu}(\Omega))$. Thus
\[\gamma^+(M_0^{m^{p-1}\mu}(\Omega))\geq k,\]
and the proof is concluded.
\end{proof}

\noindent
Now we prove that the co-genus is (up to a correction factor) sub-additive from the right:

\begin{lem}\label{cogenus-sub}
If $\Omega_1,\ldots\Omega_m\subset\R^N$ are bounded domains with Lipschitz boundaries, such that $\Omega_i\cap\Omega_j=\emptyset$ for all $i\neq j$ and $\cup_{i=1}^m \overline{\Omega_i}=\overline\Omega$, then for all $0<\mu'<\mu$
\[\gamma^-(M^\frac{\mu'}{m}(\Omega))\leq\sum_{i=1}^m \gamma^-(M^\mu(\Omega_i)).\]
\end{lem}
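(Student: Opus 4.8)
The plan is to construct an odd continuous map from $M^{\mu'/m}(\Omega)$ into a sphere of dimension $\sum_{i=1}^m \gamma^-(M^\mu(\Omega_i))$, which by definition of the co-genus yields the claimed inequality. As in the previous lemma we may avoid trivial cases and assume each $\gamma^-(M^\mu(\Omega_i))=k_i\in\N$, with odd continuous maps $g_i\in C_2(M^\mu(\Omega_i),S^{k_i-1})$; set $k=k_1+\cdots+k_m$. The natural idea is: given $u\in W^{s,p}(\R^N)$ with $\|u\|_{L^p(\Omega)}^p=1$ and $[u]_{s,p}^p\le\mu'/m$, restrict/localize $u$ to each $\Omega_i$, normalize it in $L^p(\Omega_i)$, feed it into $g_i$ to get a point $\xi_i\in\R^{k_i}$, then rescale $\xi_i$ by the weight $\|u\|_{L^p(\Omega_i)}$ (or its $2/p$-power, mirroring Lemma~\ref{genus-super}) and assemble $\xi=(\xi_1,\dots,\xi_m)\in\R^k$. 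Since $\sum_i\|u\|_{L^p(\Omega_i)}^p=1$, an appropriate normalization places $\xi$ on $S^{k-1}$, and oddness is immediate from oddness of each $g_i$.

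First I would make precise the localization: for $u\in M^{\mu'/m}(\Omega)$ and each $i$, the function $u$ itself (not a cut-off) lies in $W^{s,p}(\R^N)$ with $\|u\|_{L^p(\Omega_i)}\le 1$ and $[u]_{s,p}^p\le\mu'/m<\mu$; the point is that $g_i$ is defined on $M^\mu(\Omega_i)$, whose elements are normalized so that $\|\cdot\|_{L^p(\Omega_i)}=1$. So for those $i$ with $c_i:=\|u\|_{L^p(\Omega_i)}>0$ we set $v_i=c_i^{-1}u$, which satisfies $\|v_i\|_{L^p(\Omega_i)}=1$ and $[v_i]_{s,p}^p=c_i^{-p}[u]_{s,p}^p\le c_i^{-p}\mu'/m$. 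Here is where the correction factor enters: we do not immediately have $[v_i]_{s,p}^p\le\mu$ unless $c_i$ is bounded below. The device, exactly as in Lemma~\ref{genus-super}, is to absorb the small $c_i$'s: define
\[
f(u)=\Big(c_1^{2/p}\,g_1(v_1),\ \dots,\ c_m^{2/p}\,g_m(v_m)\Big),
\]
with the convention that the $i$-th block is $0\in\R^{k_i}$ when $c_i=0$. Then $|f(u)|^2=\sum_i c_i^{4/p}|g_i(v_i)|^2=\sum_i c_i^{4/p}$, which is not quite $1$; so instead one should weight with the genuine $L^p$-masses, i.e. use $c_i$ (not $c_i^{2/p}$) after renormalizing $g_i(v_i)$ to the unit sphere — since $g_i$ already maps into $S^{k_i-1}$ we have $|g_i(v_i)|=1$, hence taking the block $c_i\,g_i(v_i)$ gives $|f(u)|^2=\sum_i c_i^2$. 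But $\sum_i c_i^2\ne 1$ in general either ($\sum_i c_i^p=1$). The clean fix is to compose with the obvious radial retraction $\R^k\setminus\{0\}\to S^{k-1}$: note $f(u)\ne 0$ because $\sum_i c_i^p=1$ forces some $c_i>0$, and on that block $|f(u)|\ge c_i>0$. So finally set $\tilde f(u)=f(u)/|f(u)|\in S^{k-1}$; this is odd and we must check continuity.

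The main obstacle, as in Lemma~\ref{genus-super}, is continuity at points $u$ where some $c_i=0$. On the open set where all $c_i>0$, each $v_i$ depends continuously on $u$ in $W^{s,p}(\R^N)$ (division by a nonvanishing continuous scalar), $g_i$ is continuous, and $f,\tilde f$ are manifestly continuous; the only caveat is that $v_i$ must stay inside $M^\mu(\Omega_i)$ where $g_i$ is defined, which as noted requires $c_i^{-p}\mu'/m\le\mu$, i.e. $c_i^p\ge\mu'/(m\mu)$. This is NOT automatic, so the construction above is still incomplete: one must arrange that whenever $g_i$ is actually evaluated, its argument lies in $M^\mu(\Omega_i)$. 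The resolution is to only ``switch on'' the $i$-th block gradually: replace the hard cases $c_i>0$ vs.\ $c_i=0$ by a continuous cutoff $\chi(c_i)$ that vanishes for $c_i^p\le \mu'/(m\mu)$ and equals a suitable positive value for $c_i^p$ bounded away from that threshold, and define the $i$-th block of $f(u)$ to be $\chi(c_i)\,g_i(c_i^{-1}u)$ only on the set $\{c_i^p>\mu'/(m\mu)\}$ (where the argument is legitimately in $M^\mu(\Omega_i)$) and $0$ elsewhere — the two definitions agree on the overlap since $\chi$ vanishes there. One then checks $f(u)\ne 0$: if every block vanished we'd have $c_i^p\le\mu'/(m\mu)$ for all $i$, whence $1=\sum_i c_i^p\le m\cdot\mu'/(m\mu)=\mu'/\mu<1$, a contradiction — and this is precisely where the hypothesis $\mu'<\mu$ and the factor $1/m$ in $M^{\mu'/m}(\Omega)$ are used. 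Continuity of $\tilde f=f/|f|$ then follows from continuity of $f$ (the cutoff $\chi\circ c_i$ kills the only discontinuities of $c_i^{-1}u$, exactly paralleling the three-case analysis in Lemma~\ref{genus-super}) together with $|f|$ bounded below on $M^{\mu'/m}(\Omega)$ by a positive constant depending only on $\mu,\mu',m$. Hence $\tilde f\in C_2\big(M^{\mu'/m}(\Omega),S^{k-1}\big)$, so $\gamma^-(M^{\mu'/m}(\Omega))\le k=\sum_{i=1}^m k_i=\sum_{i=1}^m\gamma^-(M^\mu(\Omega_i))$, as desired.
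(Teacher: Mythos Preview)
Your final construction is correct and is essentially the paper's proof: normalize $u$ in each $L^p(\Omega_i)$, feed into the given odd maps $g_i$, kill the $i$-th block by a continuous cutoff whenever the normalized function would leave $M^\mu(\Omega_i)$, then radially project to $S^{k-1}$; the pigeonhole argument showing the assembled vector is nonzero is exactly where $\mu'<\mu$ and the factor $1/m$ are used. The only cosmetic difference is that the paper parametrizes the cutoff by the Rayleigh-type quantity $\theta_i(u)=[u]_{s,p}^p/\|u\|_{L^p(\Omega_i)}^p$ (taking $\rho\equiv 1$ on $[0,\mu']$, $\rho\equiv 0$ on $[\mu,\infty]$) instead of by $c_i=\|u\|_{L^p(\Omega_i)}$, which makes membership in $M^\mu(\Omega_i)$ and the non-vanishing check slightly more transparent but changes nothing substantive.
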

\begin{proof}
Avoiding trivial cases, for all $1\leq i\leq m$ we assume $\gamma^-(M^\mu(\Omega_i))=k_i\in\N$ and $f_i\in C_2(M^\mu(\Omega_i),S^{k_i-1})$. For all $1\leq i\leq m$ we define a mapping $\theta_i:M^{\mu'/m}(\Omega)\to\R\cup\{\infty\}$ by setting for all $u\in M^{\mu'/m}(\Omega)$
\[\theta_i(u)=
\begin{cases}
[u]_{s,p}^p/\|u\|_{L^p(\Omega_i)}^p & \text{if $\|u\|_{L^p(\Omega_i)}>0$,} \\
\infty & \text{if $\|u\|_{L^p(\Omega_i)}=0$.}
\end{cases}\]
Moreover, if $\|u\|_{L^p(\Omega_i)}>0$ we set $u_i=\|u\|_{L^p(\Omega_i)}^{-1}u$, so that $\theta_i(u)=[u_i]_{s,p}^p$. We have
\begin{equation}\label{minimum}
\min_{1\leq i\leq m}\theta_i(u)\leq\mu'.
\end{equation}
We prove \eqref{minimum} arguing by contradiction. Assume $\theta_i(u)>\mu'$ for all $1\leq i\leq m$, then
\[1=\|u\|_{L^p(\Omega)}^p=\sum_{i=1}^m \|u\|_{L^p(\Omega_i)}^p=\sum_{i=1}^m\frac{[u]_{s,p}^p}{\theta_i(u)}<\frac{m}{\mu'}[u]_{s,p}^p\]
(with the convention that $1/\infty=0$), a contradiction.
\vskip2pt
\noindent
We can find a mapping $\rho\in C^1(\R^+\cup\{\infty\})$ such that $\rho(t)=1$ for all $0\leq t\leq\mu'$, $\rho(t)=0$ for all $\mu\leq t\leq\infty$ and $0\leq\rho(t)\leq 1$ for all $t\in\R^+$. We set for all $u\in M^{\mu'/m}(\Omega)$
\[f(u)=\Big(\sum_{i=1}^m \rho(\theta_i(u))^2\Big)^{-\frac{1}{2}}\left(\rho(\theta_1(u))f_1(u_1),\ldots, \rho(\theta_m(u))f_m(u_m)\right)\]
(with the convention that $0\cdot\text{anything}=0$). By \eqref{minimum}, $f:M^{\mu'/m}(\Omega)\to S^{k-1}$ ($k=k_1+\ldots +k_m$) is well defined. Clearly $f$ is odd. We prove now that it is continuous. Let $(u^n)$ be a sequence in $M^{\mu'/m}(\Omega)$ such that $u^n\to u$ in $W^{s,p}(\R^N)$ for some $u\in M^{\mu'/m}(\Omega)$. For any $1\leq i\leq m$ one of the following cases occurs:
\begin{itemize}
\item if $\|u\|_{L^p(\Omega_i)}>0$, then $\|u^n\|_{L^p(\Omega_i)}>0$ for $n\in\N$ big enough, whence by continuity of $f_i$ we have $\rho(\theta_i(u^n))f_i(u^n_i)\to\rho(\theta_i(u)) f_i(u_i)$;
\item if $\|u\|_{L^p(\Omega_i)}=0$ and $\|u^n\|_{L^p(\Omega_i)}>0$ for all $n\in\N$, then $\|u^n\|_{L^p(\Omega_i)}^p\to 0$, so, recalling also that $[u^n]_{s,p}\to [u]_{s,p}>0$, we have
\[\lim_n\theta_i(u^n)=\lim_n\frac{[u^n]_{s,p}^p}{\|u^n\|_{L^p(\Omega_i)}^p}=\infty,\]
in particular $\rho(\theta_i(u^n))=0$ for $n\in\N$ big enough, so $\rho(\theta_i(u^n))f_i(u^n_i)\to 0$;
\item if $\|u\|_{L^p(\Omega_i)}=0$ and $\|u^n\|_{L^p(\Omega_i)}=0$ along a subsequence, then 
we can conclude that $\rho(\theta_i(u^n))f_i(u^n_i)=\rho(\theta_i(u)) g_i(u_i)=0$, and reasoning as above we get $\rho(\theta_i(u^n))f_i(u^n_i)\to 0$.
\end{itemize}
In any case, we have $f(u^n)\to f(u)$ as $n\to\infty$. Summarizing, $f\in C_2(M^{\mu'/m}(\Omega),S^{k-1})$. Thus
\[\gamma^-(M^{\mu'/m}(\Omega))\leq k,\]
and the proof is concluded.
\end{proof}

\noindent
Now we consider the behavior of the genus and co-genus in the presence 
of homothety:

\begin{lem} 
\label{scalat}
If $\tau>0$ and $\mu>0$, then 
\[\gamma^+(M^\mu_0(\Omega))=\gamma^+(M^\frac{\mu}{\tau^{sp}}_0(\tau\Omega)), \qquad \gamma^-(M^\mu(\Omega))=\gamma^-(M^\frac{\mu}{\tau^{sp}}(\tau\Omega)).\]
\end{lem}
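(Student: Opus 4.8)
The plan is to produce an explicit odd homeomorphism between the two sublevel sets and then to transport through it the maps defining $\gamma^+$ and $\gamma^-$. For $\tau>0$ I would introduce the dilation operator $T_\tau$ acting by $T_\tau(u)(x)=\tau^{-N/p}u(x/\tau)$, $x\in\R^N$. The first step is to record its behaviour on the two relevant functionals: a change of variables $x=\tau y$ gives, for every measurable $\Omega'\subset\R^N$ and every $u\in W^{s,p}(\R^N)$,
\[
\|T_\tau(u)\|_{L^p(\tau\Omega')}^p=\|u\|_{L^p(\Omega')}^p,
\qquad
[T_\tau(u)]_{s,p}^p=\tau^{-sp}[u]_{s,p}^p,
\]
where in the Gagliardo seminorm the substitution $x=\tau y$, $x'=\tau y'$ produces the volume factor $\tau^{2N}$ and the denominator $|x-x'|^{N+sp}=\tau^{N+sp}|y-y'|^{N+sp}$, so that together with the $\tau^{-N}$ coming from the normalization one is left with $\tau^{2N-N-sp-N}=\tau^{-sp}$. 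One then checks the easy structural facts: $T_\tau$ is linear (hence odd), it carries $X(\Omega)$ onto $X(\tau\Omega)$ because $u=0$ a.e.\ outside $\Omega$ is equivalent to $T_\tau(u)=0$ a.e.\ outside $\tau\Omega$, and it is a homeomorphism of $W^{s,p}(\R^N)$ with inverse $T_{1/\tau}$.

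Next, the two scaling identities show that $u\in M_0^\mu(\Omega)$ implies $\|T_\tau(u)\|_{L^p(\tau\Omega)}^p=1$ and $[T_\tau(u)]_{s,p}^p=\tau^{-sp}[u]_{s,p}^p\le\mu/\tau^{sp}$, so $T_\tau(M_0^\mu(\Omega))\subseteq M_0^{\mu/\tau^{sp}}(\tau\Omega)$; applying the same computation to $T_{1/\tau}$ shows this inclusion is in fact an equality, so $T_\tau$ restricts to an odd homeomorphism of $M_0^\mu(\Omega)$ onto $M_0^{\mu/\tau^{sp}}(\tau\Omega)$. Dropping the support condition, the identical argument gives an odd homeomorphism of $M^\mu(\Omega)$ onto $M^{\mu/\tau^{sp}}(\tau\Omega)$.

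Finally I would conclude by composition. If $g\in C_2(S^{k-1},M_0^\mu(\Omega))$ then $T_\tau\circ g\in C_2(S^{k-1},M_0^{\mu/\tau^{sp}}(\tau\Omega))$, and the reverse direction uses $T_{1/\tau}$; this gives $\gamma^+(M_0^\mu(\Omega))=\gamma^+(M_0^{\mu/\tau^{sp}}(\tau\Omega))$. Likewise, if $g\in C_2(M^\mu(\Omega),S^{k-1})$ then $g\circ T_{1/\tau}\in C_2(M^{\mu/\tau^{sp}}(\tau\Omega),S^{k-1})$, and conversely, yielding the equality for $\gamma^-$. There is no genuine obstacle in this argument; the only points that require a little care are pinning down the normalization exponent $N/p$ so that the $L^p$-constraint is preserved exactly while the seminorm picks up precisely the factor $\tau^{-sp}$, and verifying surjectivity of the dilation onto the target sublevel set, so that one obtains equalities rather than merely one-sided estimates.
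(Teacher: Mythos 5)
Your proof is correct and matches the paper's argument: both rely on the dilation scalings of $\|\cdot\|_{L^p}$ and $[\cdot]_{s,p}$ to build an odd homeomorphism between the two sublevel sets, then transport the maps defining $\gamma^\pm$. In fact, since $\|u^\tau\|_{L^p(\tau\Omega)}=\tau^{N/p}$ for $u$ with unit $L^p(\Omega)$-norm, the paper's map $u\mapsto\|u^\tau\|_{L^p(\tau\Omega)}^{-1}u^\tau$ is exactly your $T_\tau$, so the two proofs coincide up to writing the normalization in closed form.
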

\begin{proof}
For all $\tau>0$ and all $u\in W^{s,p}(\R^N)$ we set 
$u^\tau(z)=u(\tau^{-1}z)$, for all $z\in\R^N$. Then, a simple 
change of variables leads to
\begin{equation}\label{homo}
[u^\tau]_{s,p}^p=\tau^{N-sp}[u]_{s,p}^p, \quad \|u^\tau\|_{L^p(\tau\Omega)}^p=\tau^N\|u\|_{L^p(\Omega)}^p.
\end{equation}
For all $u\in M^\mu_0(\Omega)$ let us set $f(u)=\|u^\tau\|_{L^p(\tau\Omega)}^{-1}u^\tau$. 
Clearly $f(u)\in W^{s,p}(\R^N)$ and $f(u)=0$ a.e. in $\R^N\setminus\tau\Omega$. Furthermore, 
from equalities \eqref{homo}, we have $\|f(u)\|_{L^p(\tau\Omega)}=1$ and
\[[f(u)]_{s,p}^p=\frac{[u^\tau]_{s,p}^p}{\|u^\tau\|_{L^p(\tau\Omega)}^p}=\frac{[u]_{s,p}^p}{\tau^{sp}}\leq\frac{\mu}{\tau^{sp}}.\]
Thus, $f\in C_2(M^\mu_0(\Omega),M^{\mu/\tau^{sp}}_0(\tau\Omega))$. Since $f$ is a homeomorphism, we get the first equality.
\vskip2pt
\noindent
In a similar way, by using the homeomorphism $g\in C_2(M^{\mu/\tau^{sp}}(\tau\Omega),M^\mu(\Omega))$ defined for all $v\in M^{\mu/\tau^{sp}}(\tau\Omega)$ by setting $g(v)=\|v^{1/\tau}\|_{L^p(\Omega)}^{-1}v^{1/\tau}$, we achieve the second equality.
\end{proof}

\section{Proof of the main result}

\noindent
We give now the proof of Theorem \ref{main}.
\vskip2pt
\noindent
The first part of the assertion follows from Proposition \ref{eigen}, so we only need to prove the asymptotic estimates \eqref{asym-main1} and \eqref{asym-main2}. From \cite[Theorem 4.6 $(iii)$]{PAO}, for all $\lambda>0$ we have
\begin{equation}\label{count}
\mathcal{N}(\lambda)=i(M_0^\lambda(\Omega)).
\end{equation}
Preliminarly, we make some observations on cubes. Let $Q$ be a unit cube in $\R^N$ and $\lambda_0>0$ be such that $M^{\lambda_0}_0(Q)\neq\emptyset$. Then we have $\gamma^+(M^{\lambda_0}_0(Q))=r$ and $\gamma^-(M^{\lambda_0}(Q))=q$ for some $r,q\in\N$. For all $\lambda'>\lambda_0$ set $a_{\lambda'}=(\lambda_0/\lambda')^{1/sp}$. By Lemma \ref{scalat} (with $\mu=\lambda_0$ and $\tau=a_{\lambda'}$) we have
\begin{equation}\label{smallcube}
\gamma^+(M^{\lambda'}_0(a_{\lambda'}Q))=r, \quad \gamma^-(M^{\lambda'}(a_{\lambda'}Q))=q.
\end{equation}
Now we prove \eqref{asym-main1}. Since $\Omega$ is open, bounded and with a Lipschitz boundary, there exist $0<a<1$ and $n\in\N$ and a set $\Omega'\subseteq\Omega$, union of $n$ copies of $aQ$ with pairwise disjoint interiors, such that $na^N=|\Omega'|\geq|\Omega|/2$. We assume
\begin{equation}\label{big}
\lambda\geq\lambda_0 n^{p-1}a^{-sp},
\end{equation}
and set
\[C_1=2^{-\frac{N^2p-N^2+Nsp+sp}{Np-N+sp}} r \lambda_0^{-\frac{N}{Np-N+sp}}.\]
We consider the cube $aQ$ and set
\[\lambda'=\Big(\lambda_0^\frac{Np-N}{sp}(na^N)^{1-p}\lambda\Big)^\frac{sp}{Np-N+sp},\]
hence by \eqref{big} we have $\lambda'>\lambda_0$ and $a\geq a_{\lambda'}$. The cube $aQ$ contains the union of $m$ copies of $a_{\lambda'}Q$, where $m=[a/a_{\lambda'}]^N\geq 1$ (here $[\ \cdot \ ]$ denotes the integer part of a real number). From the elementary inequality $\alpha/2\leq[\alpha]\leq\alpha$ for all $\alpha\geq 1$ we have
\[2^{-N}\lambda_0^{-\frac{N}{sp}}a^N(\lambda')^\frac{N}{sp}\leq m\leq\lambda_0^{-\frac{N}{sp}}a^N(\lambda')^\frac{N}{sp}.\]
We apply the inequalities above, \eqref{smallcube} and Lemmas \ref{genus-super}, \ref{monot} and we have
\begin{align*}
2^{-N} r \lambda_0^{-\frac{N}{sp}}a^N(\lambda')^\frac{N}{sp} &\leq mr = m\gamma^+(M^{\lambda'}_0(a_{\lambda'}Q)) \\
&\leq \gamma^+(M^{m^{p-1}\lambda'}_0(aQ)) \leq \gamma^+\Big(M^{\lambda_0^\frac{N-Np}{sp}a^{Np-N}(\lambda')^\frac{Np-N+sp}{sp}}_0(aQ)\Big).
\end{align*}
The inequality above rephrases as the following:
\begin{equation}\label{big1}
\gamma^+(M^{n^{1-p}\lambda}_0(aQ))\geq\frac{r}{2^N}\lambda_0^{-\frac{N}{Np-N+sp}}n^\frac{N-Np}{Np-N+sp}a^\frac{Nsp}{Np-N+sp}\lambda^\frac{N}{Np-N+sp}.
\end{equation}
We apply again Lemmas \ref{genus-super}, \ref{monot} and \eqref{big1} and we obtain
\begin{align*}
\gamma^+(M^\lambda_0(\Omega)) &\geq \gamma^+(M^\lambda_0(\Omega')) \geq n\gamma^+(M^{n^{1-p}\lambda}_0(aQ)) \\
&\geq 2^{-N} r \lambda_0^{-\frac{N}{Np-N+sp}}(na^N)^\frac{sp}{Np-N+sp}\lambda^\frac{N}{Np-N+sp} \geq C_1|\Omega|^\frac{sp}{Np-N+sp}\lambda^\frac{N}{Np-N+sp}.
\end{align*}
By \eqref{ind-gen} and \eqref{count}, we have \eqref{asym-main1}.
\vskip2pt
\noindent
Now we prove \eqref{asym-main2}, under the hypothesis $sp>N$. We can find $0<b<1$, $h\in\N$ and the union $\Omega''\subset\R^N$ of $h$ copies of $bQ$ with pairwise disjoint interiors, such that $\Omega\subseteq\Omega''$ and $hb^N=|\Omega''|\leq 2|\Omega|$. We assume
\begin{equation}\label{small}
\lambda\geq \left(2^{N+1}h b^{sp}\right)^{-1}\lambda_0,
\end{equation}
and $\lambda''>\lambda$, and we set
\[C_2=2^\frac{Nsp+sp+N}{sp-N}q\lambda_0^{-\frac{N}{sp-N}}.\]
We focus on the cube $bQ$. Setting
\[\lambda'=\Big(2^{N+1}\lambda_0^{-\frac{N}{sp}}h b^N\lambda''\Big)^\frac{sp}{sp-N},\]
so by \eqref{small} we have $\lambda'>\lambda_0$ and $b\geq a_{\lambda'}$. So, $bQ$ is contained in the union of $k=([b/a_{\lambda'}]+1)^N$ copies of $a_{\lambda'}Q$ with pairwise disjoint interiors. From the elementary inequality $\alpha\leq[\alpha]+1\leq 2\alpha$ for all $\alpha\geq 1$ we have
\[\lambda_0^{-\frac{N}{sp}}b^N(\lambda')^\frac{N}{sp}\leq k\leq 2^N\lambda_0^{-\frac{N}{sp}}b^N(\lambda')^\frac{N}{sp}.\]
We use the inequalities above, \eqref{smallcube} and Lemmas \ref{monot} and \ref{cogenus-sub} (with $\mu=\lambda'$ and $\mu'=\lambda'/2$) to get
\begin{align*}
\gamma^-(M^{h\lambda''}(bQ)) &= \gamma^-\Big(M^{2^{-N-1}\lambda_0^\frac{N}{sp}b^{-N}(\lambda')^\frac{sp-N}{sp}}(bQ)\Big) \leq \gamma^-(M^{(2k)^{-1}\lambda'}(bQ)) \\
&\leq k\gamma^-(M^{\lambda'}(a_{\lambda'}Q)) = kq \leq 2^N q \lambda_0^{-\frac{N}{sp}} b^N(\lambda')^\frac{N}{sp},
\end{align*}
which rephrases as
\begin{equation}\label{small1}
\gamma^-(M^{h\lambda''}(bQ))\leq 2^\frac{Nsp+N}{sp-N} q \lambda_0^{-\frac{N}{sp-N}}h^\frac{N}{sp-N}b^\frac{Nsp}{sp-N}(\lambda'')^\frac{N}{sp-N}.
\end{equation}
Again by Lemmas \ref{monot} and \ref{cogenus-sub} (this time with $\mu=h\lambda''$ and $\mu'=h\lambda$) and by \eqref{small1}, we have
\begin{align*}
\gamma^-(M^\lambda(\Omega)) &\leq \gamma^-(M^\lambda(\Omega'')) \leq h \gamma^-(M^{h\lambda''}(bQ)) \\
&\leq 2^\frac{Nsp+sp+N}{sp-N} q \lambda_0^{-\frac{N}{sp-N}} |\Omega|^\frac{sp}{sp-N}(\lambda'')^\frac{N}{sp-N} = C_2|\Omega|^\frac{sp}{sp-N}(\lambda'')^\frac{N}{sp-N}.
\end{align*}
Letting $\lambda''\to\lambda$, we obtain
\[\gamma^-(M^\lambda(\Omega))\leq C_2|\Omega|^\frac{sp}{sp-N}\lambda^\frac{N}{sp-N},\]
which through \eqref{ind-gen} and \eqref{count} implies \eqref{asym-main2}. \qed

\bigskip
\noindent
Finally, we prove property $(v)$ stated in the Introduction.

\begin{prop}\label{symmetry}
If $\Omega$ is a ball, then any positive (resp. negative) $\lambda_1$-eigenfunction is radially symmetric and radially decreasing (resp. increasing).
\end{prop}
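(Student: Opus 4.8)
The plan is to combine the variational characterization of $\lambda_1$ with a symmetrization inequality and the simplicity of $\lambda_1$. We may assume $\Omega=B_R$ is the open ball of radius $R$ centered at the origin, and it suffices to treat a positive $\lambda_1$-eigenfunction $u$ (so $u\geq 0$, $u\not\equiv 0$), the negative case following by applying the result to $-u$. Choosing $v=u$ in \eqref{var} gives $[u]_{s,p}^p=\lambda_1\,\|u\|_{L^p(\Omega)}^p$, so $u$ attains the infimum in
\[
\lambda_1=\inf_{v\in X(\Omega)\setminus\{0\}}\frac{[v]_{s,p}^p}{\|v\|_{L^p(\Omega)}^p};
\]
conversely, any $v\in X(\Omega)\setminus\{0\}$ attaining this infimum is, after normalization onto $S$, a minimizer of $\tilde J$, hence a critical point of $\tilde J$ at level $\lambda_1$, i.e.\ a $\lambda_1$-eigenfunction.

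The core of the argument is the Schwarz symmetrization. Let $u^*$ denote the symmetric decreasing rearrangement of $u$: since $u\in L^p(\R^N)$ is nonnegative, $u^*$ is well defined, $\|u^*\|_{L^p(\R^N)}=\|u\|_{L^p(\R^N)}=\|u\|_{L^p(\Omega)}$ by equimeasurability, and $\{u^*>0\}$ is the ball centered at the origin with $|\{u^*>0\}|=|\{u>0\}|\leq|\Omega|$, so $u^*=0$ a.e.\ in $\R^N\setminus\Omega$. The one analytic ingredient needed is the fractional P\'olya--Szeg\H{o} inequality
\[
[u^*]_{s,p}\leq[u]_{s,p},
\]
valid for $0<s<1$, $p>1$, $N\geq 2$. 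Given this, $u^*\in X(\Omega)\setminus\{0\}$ and
\[
\lambda_1\leq\frac{[u^*]_{s,p}^p}{\|u^*\|_{L^p(\Omega)}^p}\leq\frac{[u]_{s,p}^p}{\|u\|_{L^p(\Omega)}^p}=\lambda_1,
\]
so $u^*$ also attains the infimum and is therefore a $\lambda_1$-eigenfunction. By the simplicity of $\lambda_1$ (property $(ii)$ of the Introduction), $u^*=c\,u$ for some $c\in\R$; since $u,u^*\geq 0$ are nontrivial, $c>0$, and $\|u^*\|_{L^p(\Omega)}=\|u\|_{L^p(\Omega)}$ forces $c=1$. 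Hence $u=u^*$, which is by construction radially symmetric and radially decreasing about the origin.

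I expect the fractional P\'olya--Szeg\H{o} inequality $[u^*]_{s,p}\leq[u]_{s,p}$ to be the only real obstacle; the remaining steps are soft consequences of the variational setup and of the already-established simplicity of $\lambda_1$. This inequality belongs to standard rearrangement theory and can either be quoted from the literature on fractional Sobolev spaces or established via polarizations: for any affine half-space $H\subset\R^N$, with $\sigma_H$ the reflection across $\partial H$, the polarization $u^H$ of $u\geq 0$ satisfies $\|u^H\|_{L^p(\R^N)}=\|u\|_{L^p(\R^N)}$ and $[u^H]_{s,p}\leq[u]_{s,p}$, the latter following, after folding the double integral defining $[\cdot]_{s,p}$ over $H\times H$, from an elementary four-point inequality for the values of $u$ at $x,\sigma_Hx,y,\sigma_Hy$ that uses only the convexity of $t\mapsto|t|^p$ and the ordering $|x-y|\leq|x-\sigma_Hy|$ for $x,y\in H$; since $u^*$ is an $L^p$-limit of suitably iterated polarizations of $u$ and $[\cdot]_{s,p}$ is lower semicontinuous under $L^p$-convergence, one gets $[u^*]_{s,p}\leq[u]_{s,p}$.
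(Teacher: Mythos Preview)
Your proof is correct and follows essentially the same route as the paper: Schwarz symmetrization, the fractional P\'olya--Szeg\H{o} inequality, the variational characterization of $\lambda_1$, and simplicity of $\lambda_1$ to conclude $u=u^*$. The only cosmetic difference is that the paper simply cites Baernstein \cite{Ba} for $[u^*]_{s,p}\leq[u]_{s,p}$, whereas you sketch the polarization approach; both lead to the same conclusion.
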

\begin{proof}
Let $u\in X(\Omega)$ be a positive $\lambda_1$-eigenfunction in the ball $\Omega$. If we denote $u^*$ the Schwartz symmetrization of $u$, we learn 
from Baernstein \cite[Theorem 3]{Ba} that $u^*\in X(\Omega)$, 
$\|u^*\|_{L^p(\Omega)}=\|u\|_{L^p(\Omega)}$ and $[u^*]_{s,p}\leq [u]_{s,p}$.
Hence, in turn, we have from \eqref{poinc}
\[\lambda_1\leq\frac{[u^*]_{s,p}^p}{\|u^*\|_{L^p(\Omega)}^p}\leq\frac{[u]_{s,p}^p}{\|u\|_{L^p(\Omega)}^p}=\lambda_1.\]
Thus $u^*\in X$ is a $\lambda_1$-eigenfunction too. By \cite[Theorem 4.2]{FP}, $u^*$ and $u$ are proportional and by the equalities above we obtain $u^*=u$, completing the proof.
\end{proof}

\begin{rem}\label{genus}
We observe that alternative sequences of variational eigenvalues $(\mu^\pm_k)$ can be produced by replacing the index $i$ with the genus/co-genus $\gamma^\pm$ in the min-max formula \eqref{min-max}  (see \cite[p. 75]{PAO}). Due to \eqref{ind-gen}, we then have $\mu^-_k\le\lambda_k\le\mu^+_k$ for all $k\in\N$, while it is not known whether the sequences coincide or not. In any case, denoting $\mathcal{N}^\pm$ the counting function for $(\mu^\pm_k)$, we have $\mathcal{N}^+(\lambda)\le\mathcal{N}(\lambda)\le\mathcal{N}^-(\lambda)$ for all $\lambda>0$, hence estimate \eqref{asym-main1} holds true for $\mathcal{N}^-$ and \eqref{asym-main2} for $\mathcal{N}^+$, respectively.
\end{rem}

\bigskip


\begin{thebibliography}{99}

\bibitem{Ba}
{\sc A.\ Baernstein II},
A unified approach to symmetrization, 
Partial differential equations of elliptic type (Cortona, 1992) Sympos. Math., XXXV, Cambridge Univ. Press, Cambridge (1994) pp. 47--91.

\bibitem{B}
{\sc A.\ Bonnet},
A deformation lemma on a $C^1$ manifold,
Manuscripta Math. {\bf 81} (1993) 339--359.

\bibitem{CS}
{\sc L.\ Caffarelli, L.\ Silvestre},
An extension problem related to the fractional Laplacian, 
Comm. Partial Differential Equations {\bf 32} (2007) 1245--1260.

\bibitem{DPV}
\textsc{E.\ Di Nezza, G.\ Palatucci, E.\ Valdinoci},
Hitchhiker's guide to the fractional Sobolev spaces,
Bull. Sci. Math. \textbf{136} (2012) 521--573.

\bibitem{FR}
\textsc{E.R.\ Fadell, P.H.\ Rabinowitz},
Generalized cohomological index theories for Lie group actions with an application to bifurcation questions for Hamiltonian systems,
Invent. Math. {\bf 45} (1978) 139--174.

\bibitem{FG}
{\sc R.L.\ Frank, L.\ Geisinger},
Refined semiclassical asymptotics for fractional powers of the Laplace operator,
J. Reine Angew. Math., to appear.

\bibitem{FP}
\textsc{G. Franzina, G. Palatucci},
Fractional $p$-eigenvalues,
Riv. Mat. Univ. Parma, to appear.

\bibitem{F}
\textsc{L.\ Friedlander},
Asymptotic behavior of the eigenvalues of the $p$-Laplacian,
Comm. Partial Differential Equations {\bf 14} (1989) 1059--1069.

\bibitem{GP}
{\sc J.\ Garc\'{\i}a Azorero, I.\ Peral Alonso},
Comportement asymptotique des valeurs propres du $p$-laplacien,
C. R. Acad. Sci. Paris S\'er. I Math. {\bf 307} (1988) 75--78.

\bibitem{G}
{\sc L.\ Geisinger},
A short proof of Weyl's law for fractional differential operators,
J. Math. Phys. {\bf 55} (2014) 011504.

\bibitem{K}
{\sc M.\ Kwasnicki},
Eigenvalues of the fractional Laplace operator in the interval,
J. Funct. Anal. {\bf 262} (2012) 2379--2402.

\bibitem{L}
{\sc N. Laskin}, 
Fractional quantum mechanics and L\'evy path integrals, 
Phys. Lett. A {\bf 268} (2000) 298--305

\bibitem{LL}
\textsc{E.\ Lindgren, P.\ Lindqvist},
Fractional eigenvalues,
Calc. Var., to appear.

\bibitem{MMP}
{\sc D.\ Motreanu, V.V.\ Motreanu, N.S.\ Papageorgiou},
Topological and variational methods with applications to nonlinear boundary value problems,
Springer, New York, xi+459 pp. (2014).

\bibitem{PAO}
{\sc K.\ Perera, R.P.\ Agarwal, D.\ O'Regan},
Morse theoretic aspects of $p$-Laplacian type operators,
Mathematical Surveys and Monographs, American Math Society, Providence, xx+141 pp. (2010).


\bibitem{W}
{\sc H.\ Weyl}, 
Das asymptotische Verteilungsgesetz der Eigenwerte linearer partieller Differentialgleichungen (mit
einer Anwendung auf die Theorie der Hohlraumstrahlung), 
Math. Ann. {\bf 71} (1912) 441--479.

\end{thebibliography}
\end{document}